\newtheorem{theorem}{Theorem}[section]
\newtheorem{lemma}[theorem]{Lemma}
\numberwithin{figure}{section}
\newcommand{\de}{\backslash}
\begin{document}

\title{On almost-planar graphs}

\author{Guoli Ding, Joshua Fallon, and Emily Marshall\\ 
Mathematics Department, Louisiana State University, Baton Rouge, LA 70803}

\date{\today}

\maketitle

\begin{abstract}
A nonplanar graph $G$ is called {\it almost-planar} if for every edge $e$ of $G$, at least one of $G\de e$ and $G/e$ is planar. In 1990, Gubser characterized 3-connected almost-planar graphs in his dissertation. However, his proof is so long that only a small portion of it was published. The main purpose of this paper is to provide a short proof of this result. We also discuss the structure of almost-planar graphs that are not 3-connected.
\end{abstract}

\section{Introduction}

A nonplanar graph $G$ is called {\it almost-planar} if for every edge $e$ of $G$, at least one of $G\de e$ and $G/e$ is planar. The following is an equivalent definition. For any specified set $\cal S$ of graphs, let us call a graph $\cal S$-{\it free} if it does not contain any graph in $\cal S$ as a minor. Using this terminology, we can say that a graph $G$ is almost-planar if and only if $G$ is not $\{K_5,K_{3,3}\}$-free but for every edge $e$ of $G$, at least one of $G\de e$ and $G/e$ is $\{K_5,K_{3,3}\}$-free.

The second definition is a special case of a concept in matroid theory. Let $\cal S$ be a specified set of matroids. Let {\it $\cal S$-free matroids} be defined analogous to $\cal S$-free graphs. Then a matroid $M$ is called $\cal S$-{\it fragile} if $M$ is not $\cal S$-free but for every element $e$ of $M$, at least one of $M\de e$ and $M/e$ is $\cal S$-free. In recent years, fragility has received a lot of attention due to its connection with representability. There are characterizations of $\cal S$-fragile matroids for several choices of $\cal S$. Among these, the first is a characterization of $\{K_5,K_{3,3}\}$-fragile 3-connected graphic matroids. This result was obtained by Gubser in his dissertation \cite{gubser2}, which was done before the term ``$\cal S$-fragile" was introduced. In the following discussion, we will use the term ``almost-planar graph" instead of ``$\{K_5,K_{3,3}\}$-fragile graph".

To state the characterization of Gubser we need a few definitions. Let $n\ge3$ be an integer. A {\it wheel} of size $n$, denoted by $W_n$, is the graph obtained from a cycle of length $n$ by adding a vertex and joining it to all vertices of the cycle. A {\it double wheel} of size $n$, denoted by $DW_n$, is the graph obtained from a cycle of length $n$ by adding two adjacent vertices and joining them to all vertices of the cycle. A {\it M\"obius ladder} of length $n$, denoted by $M_n$, is obtained from a cycle of length $2n$ by joining opposite pairs of vertices on the cycle. Finally, let $\cal W$ denote the set of all graphs constructed by identifying three triangles from three wheels. In other words, each graph $G\in\cal W$ admits a partition $(V_0,V_1,V_2,V_3)$ of its vertex set such that $G[V_0]$ is a triangle, $G[V_0\cup V_i]$ is a wheel $(i=1,2,3)$, and $G$ has no edges other than those in these three wheels.

\begin{theorem}[Gubser \cite{gubser2}] \label{thm:main}
Let $G$ be a 3-connected nonplanar graph. Then the following are equivalent. \\ 
\indent (i) $G$ is almost-planar; \\ 
\indent (ii) $G$ is a minor of a double wheel, a M\"obius ladder, or a graph in $\cal W$; \\ 
\indent (iii) $G$ is $\cal F$-free, where $\mathcal F = \{EX_1,EX_2,EX_3,EX_6,EX_8\}$ shown in Figure \ref{fig:exminors}.

\begin{figure}[ht]
\centerline{\includegraphics[scale=0.4]{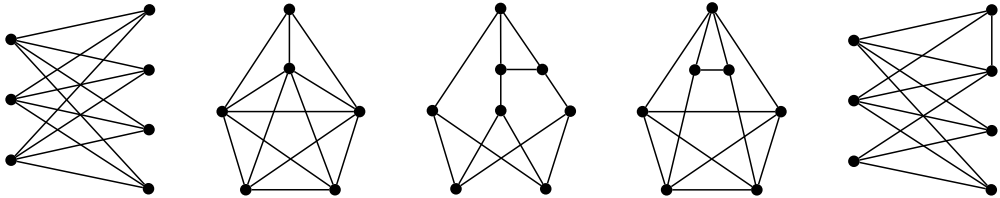}}
\caption{ Forbidden graphs $EX_1$, $EX_2$, $EX_3$, $EX_6$, and $EX_8$}
\label{fig:exminors}
\end{figure}
\end{theorem}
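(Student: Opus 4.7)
The plan is to prove the equivalence by the cycle $(ii) \Rightarrow (i) \Rightarrow (iii) \Rightarrow (ii)$, with the last implication carrying essentially all of the difficulty. A preliminary observation that simplifies the first two implications is that almost-planarity is preserved under nonplanar minors: if $H = G/F \setminus D$ is a nonplanar minor of an almost-planar $G$ and $e \in E(H)$, then whichever of $G\setminus e$ or $G/e$ is planar yields, by contracting $F$ and deleting $D$, a planar graph equal to $H\setminus e$ or $H/e$ respectively.

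Given this preliminary fact, $(ii) \Rightarrow (i)$ reduces to verifying directly that each $DW_n$, each $M_n$, and each member of $\cal W$ is almost-planar. For each family one picks a representative from every edge-orbit of the obvious symmetry group and exhibits an explicit planar deletion or contraction; for instance, contracting the hub--hub edge of $DW_n$ produces the wheel $W_n$. Similarly, $(i) \Rightarrow (iii)$ reduces to checking that each $EX_i \in \cal F$ is nonplanar and itself \emph{not} almost-planar: one exhibits a single edge $e$ of $EX_i$ such that both $EX_i\setminus e$ and $EX_i/e$ still contain a $K_5$- or $K_{3,3}$-minor, and then the preliminary observation forbids $EX_i$ as a minor of any almost-planar graph.

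The substantive work is in $(iii) \Rightarrow (ii)$: every 3-connected nonplanar $\cal F$-free graph $G$ must be shown to be a minor of some $DW_n$, $M_n$, or member of $\cal W$. My approach is to apply Seymour's Splitter Theorem to a smallest 3-connected nonplanar minor $N$ of $G$, which must be $K_5 = DW_3$ or $K_{3,3} = M_3$ and so already lies in the target list. Then one grows $G$ from $N$ by 3-connected single-element extensions, verifying at each step that either the intermediate graph remains a minor of some $DW_n$, $M_n$, or member of $\cal W$, or else some $EX_i \in \cal F$ appears. Because the three target families are structurally quite different (a pair of adjacent hubs; antipodal chords on a cycle; three fans sharing a common triangle), the analysis splits naturally according to how each new element attaches to the existing hub/rim/triangle structure.

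The principal obstacle is controlling this case analysis coherently: small graphs belong to more than one family (e.g.\ $K_5$ sits in all three), so past a certain size one must show that every $\cal F$-free extension locks $G$ onto a single track. The graphs $EX_1,\ldots,EX_8$ are precisely the obstructions blocking ``hybrid'' extensions; each arises from a specific attachment that would let $G$ escape all three families simultaneously, and systematically ruling these out — while recognizing which extensions map into which track — is where the combinatorial bookkeeping will demand the most care.
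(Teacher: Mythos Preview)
Your outline for $(ii)\Rightarrow(i)$ and $(i)\Rightarrow(iii)$ matches the paper essentially verbatim. For $(iii)\Rightarrow(ii)$, however, you have reproduced precisely the strategy the paper is written to \emph{replace}: growing $G$ from $K_5$ or $K_{3,3}$ via the Splitter Theorem and checking every single-element 3-connected extension is exactly Gubser's original dissertation proof, which the paper reports split into fifteen cases and was too long to publish in full. The paper's route is structurally different. It first inserts an intermediate statement (iv), namely that $G$ is $\{K_5^+,K_{3,3}^+,K_5^h,K_{3,3}^h\}$-free, and derives $(iii)\Rightarrow(iv)$ from a short lemma on triad additions. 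For $(iv)\Rightarrow(ii)$ it then avoids extension-by-extension analysis entirely: a key lemma shows that across any 3-separation with one nonplanar side, the other side must be a wheel, so $G$ decomposes as an internally 4-connected nonplanar core $G'$ with wheels 3-summed onto distinct triangles. The core $G'$ is then pinned down using the Maharry--Robertson structure theorem for $M_4$-free internally 4-connected graphs (together with a direct argument when $G'$ does contain $M_4$), giving $G'\in\{M_n,C_{2n+1}^2,DW_n,AW_{2n}\}$, after which only a short analysis of how wheels can attach to each core type remains. Your approach is valid and more elementary in its prerequisites, but buys that at the cost of the long case analysis you anticipate; the paper trades that bookkeeping for one external structure theorem and a global 3-sum decomposition.
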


We remark that terminology $EX_1,EX_2,\dots,EX_8$ is inherited from \cite{gubser2}. We also remark that our formulation of this theorem is slightly different from that given in \cite{gubser2}. First, we modified the definition of graphs in $\cal W$. We make the change because our description better reveals the structure of these graphs. Second, set $\cal F$ given in \cite{gubser2} consists of eight graphs, instead of five graphs. These eight include the five listed in Figure \ref{fig:exminors} and another three graphs, which were denoted by $EX_4$, $EX_5$, and $EX_7$. Since each of these three extra graphs contains $EX_8$ as a minor, they are removed from our statement.  

By Seymour's Splitter Theorem \cite{splitter}, every almost-planar graph can be generated from $K_5$ and $K_{3,3}$ by repeatedly adding edges and splitting vertices, although not every graph constructed this way has to be almost-planar. Therefore, to prove that every almost-planar graph must be one of the three types listed in (ii), one only needs to show that any extension of any graph of the three types either results in another graph of the three types or contains a member of $\cal F$ as a minor. This is how the proof went in \cite{gubser2}. The author divided the analysis into fifteen cases, depending on which graph is extended and how the graph is extended. Since the case checking is lengthy, the published proof \cite{gubser1} includes only two of the fifteen cases. 

The main purpose of this paper is to provide a short proof of Theorem \ref{thm:main}. At the end of the paper, we will also discuss the structure of almost-planar graphs that are not 3-connected. This discussion corrects a few flaws appearing in \cite{gubser1}. 

We close this section by making two more remarks. Notice that graphs in $\cal W$ can be naturally divided into three groups, depending on how the three hubs are distributed on the common triangle. When none of the hubs are identified, the resulting graph is in fact a minor of a M\"obius ladder, as illustrated in Figure \ref{fig:mobius}. What this means is that we could define $\cal W$ differently so that these graphs are not included in $\cal W$. We choose to leave them in $\cal W$ since it makes the definition more clean. This is another difference between our formulation of Theorem \ref{thm:main} and the formulation given in \cite{gubser2}. 

\begin{figure}[ht]
\centerline{\includegraphics[scale=0.5]{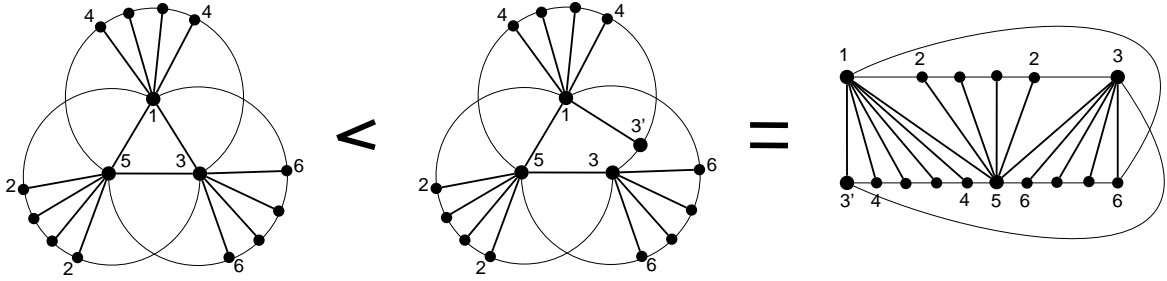}}
\caption{ Some graphs in $\cal W$ are minors of a M\"obius ladder}
\label{fig:mobius}
\end{figure}

Next, for each $G\in\{K_5,K_{3,3}\}$, let $G^+$ be obtained from $G$ by adding a new vertex $v$ and a new edge $e$ between $v$ and $G$. Then $G^+$ is not almost-planar since both $G^+\de e$ and $G^+/e$ are nonplanar. Notice that $EX_2$ contains $K_5^+$, while $EX_1$ and $EX_8$ contain $K_{3,3}^+$. Hence $EX_1$, $EX_2$, and $EX_8$ are not minimally non-almost-planar nonplanar graphs. However, by Theorem \ref{thm:main}, within the universe of 3-connected graphs, these three graphs are minimal and in fact, members of $\cal F$ are precisely the only five minimal graphs. In the last section of this paper, we determine all minimal graphs without imposing any connectivity. 

\section{A few lemmas}

In this section we present a few lemmas that will be used in our main proof. Let $k\ge0$ be an integer. A {\it $k$-separation} of a graph $G$ is an unordered pair $\{G_1,G_2\}$ of proper induced subgraphs of $G$ such that $G_1\cup G_2=G$ and $|G_1\cap G_2|=k$. The following is an immediate corollary of (2.4) of \cite{tripod}.

\begin{lemma}
Let $x$ be a cubic vertex of a nonplanar graph $G$. Suppose $G$ has no $k$-separation $\{G_1,G_2\}$ with $k\le2$ and such that some $G_i$ contains $x$ and all its three neighbors. Then $G$ has a subgraph $G'$ such that $G'$ is a subdivision of $K_{3,3}$ and $x$ is cubic in $G'$.
\label{lem:deg3}
\end{lemma}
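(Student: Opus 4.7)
The plan is to derive Lemma \ref{lem:deg3} directly from result (2.4) of \cite{tripod}, exactly as the statement advertises; the work consists almost entirely in matching hypotheses and ruling out the $K_5$ alternative in the conclusion.

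First I would verify that our hypothesis matches the input needed by (2.4). The requirement that $G$ admit no $k$-separation $\{G_1,G_2\}$ with $k\le 2$ having $x$ together with all its three neighbors on one side is the natural ``local 3-connectivity at $x$'' condition: it prevents any small cut from isolating $x$ and its neighborhood from the rest of $G$, and hence prevents every Kuratowski subdivision of $G$ from being pushed entirely onto one side of such a cut away from $x$. This is precisely the structural assumption that (2.4) of \cite{tripod} should require in order to guarantee that some Kuratowski subdivision can be chosen through $x$.

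Next, I would apply (2.4) of \cite{tripod} to obtain a subgraph $H\subseteq G$ which is a subdivision of $K_5$ or of $K_{3,3}$ and in which $x$ occurs as a branch vertex. Here comes the only content beyond the citation: a branch vertex of a $K_5$-subdivision has degree four, but $x$ has only three edges incident to it in $G$, so the $K_5$ case is impossible. Hence $H$ is a subdivision of $K_{3,3}$, and since each branch vertex of such a subdivision is cubic, the three paths of $H$ emanating from $x$ use exactly the three edges of $G$ at $x$. Thus $x$ has degree three in $H$, and setting $G':=H$ yields the required $K_{3,3}$-subdivision in which $x$ is cubic.

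The main (and essentially only) obstacle is to confirm that our separation hypothesis lines up precisely with the hypothesis of (2.4) of \cite{tripod}, so that the invocation is legitimate; once that is done, the remaining reasoning is just the degree count to eliminate the $K_5$ alternative together with the observation that a cubic branch vertex of a $K_{3,3}$-subdivision uses all three of its incident edges.
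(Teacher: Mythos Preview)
Your proposal is correct and matches the paper's approach exactly: the paper simply declares the lemma an immediate corollary of (2.4) of \cite{tripod} and gives no further argument, and your write-up just makes explicit the one-line observation that a cubic vertex cannot serve as a branch vertex of a $K_5$-subdivision.
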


A 3-connected graph $G$ with $|G|\ge5$ is called {\it internally $4$-connected} if for every 3-separation $\{G_1, G_2\}$ of $G$, exactly one of $G_1,G_2$ is $K_{1,3}$. Notice that no cubic vertex belongs to a triangle in an internally 4-connected graph. We will use this property repeatedly. Let $C_n^2$ be the graph obtained from a cycle of length $n$ by joining all pairs of vertices of distance two on the cycle.

\begin{lemma} 
Let $G$ be an internally 4-connected nonplanar minor of a M\"obius ladder. Then $G = M_n$ or $C_{2n-1}^2$ for some integer $n\ge3$.
\label{lem:minorM}
\end{lemma}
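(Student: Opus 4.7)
The plan is to analyze a minor model of $G$ inside $M_k$ and reduce it to a canonical cyclic form. A preliminary observation is that $M_k\setminus v$ is planar for every vertex $v$: removing $v_0$ leaves the path $v_1v_2\cdots v_{2k-1}$ together with rungs $v_iv_{i+k}$ for $1\le i\le k-1$, which is drawable as two parallel rows of $k-1$ vertices joined at $v_k$. Since $G$ is nonplanar, every vertex of $M_k$ must appear in some branch set of any minor model of $G$. Fix such a model: a partition $V(M_k)=B_1\sqcup\cdots\sqcup B_m$ into connected induced subgraphs, with $G$ obtained from $M_k$ by contracting each $B_i$ to a vertex $b_i$ and, if necessary, deleting some remaining edges between distinct branch sets.

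If every $B_i$ is a singleton then $G$ is a spanning subgraph of the cubic graph $M_k$, and 3-connectivity forbids any edge deletion (which would create a degree-two vertex); hence $G=M_k=M_n$ with $n=k$. Otherwise I would choose the minor model minimizing $\sum_i(|B_i|-1)$ and argue that every non-singleton branch set is a single rim edge $\{v_i,v_{i+1}\}$. Two cases handle this. (i) If some $B$ contains a spoke $v_iv_{i+k}$, then the singleton $v_{i-1}$ is cubic in $G$ (with neighbors $v_{i-2}$, $b$, $v_{i+k-1}$) and lies in the triangle on $b$, $v_{i-1}$, $v_{i+k-1}$ --- the edge $v_{i-1}v_{i+k-1}$ being itself a spoke --- violating internal 4-connectivity. (ii) If some $B$ contains three consecutive rim vertices $v_{i-1},v_i,v_{i+1}$, then the singletons $v_{i+k-1}$ and $v_{i+k}$ are both cubic neighbors of $b$ and rim-adjacent to each other, so the triangle on $b,v_{i+k-1},v_{i+k}$ again passes through a cubic vertex. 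Other irregular branch-set shapes reduce to these two cases or else produce an explicit non-$K_{1,3}$ 3-separation of $G$.

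With branch sets restricted to singletons and rim edges, label them $B_1,\ldots,B_m$ cyclically around the rim; the surviving rim edges then form the Hamilton cycle $b_1b_2\cdots b_mb_1$ and the $k$ spokes of $M_k$ contribute all remaining edges. In the uniform case $m=k$ (every branch set a rim edge), a direct computation shows that for $k$ even the spokes identify into $k/2$ diameters of the $k$-cycle, so $G=M_{k/2}$, while for $k$ odd the spokes contribute chords of distance $(k-1)/2$ on the $k$-cycle; since multiplication by $2$ in $\mathbb{Z}_k$ maps the connection set $\{\pm 1,\pm(k-1)/2\}$ to $\{\pm 1,\pm 2\}$, this graph is isomorphic to $C_k^2=C_{2n-1}^2$ with $k=2n-1$. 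A mixed canonical model (some singletons and some rim-edge branch sets) is handled by a short direct computation on each arrangement, showing the result is again one of $M_n$ or $C_{2n-1}^2$. The main obstacle is the middle step --- showing every branch set can be assumed to be a single vertex or a single rim edge --- because internal 4-connectivity excludes only 3-separations in which both sides exceed $K_{1,3}$, so ruling out every irregular branch-set shape demands an explicit triangle-at-a-cubic-vertex or non-$K_{1,3}$ 3-separation in each configuration, and it is here that the proof is most at risk of becoming lengthy.
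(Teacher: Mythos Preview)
Your proposal heads in a workable direction but misses the single observation that collapses the whole argument, and as written it has genuine gaps.

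\textbf{The missing idea.} Choose $k$ \emph{minimal} with $G$ a minor of $M_k$. Then no chord can be deleted in forming $G$: the graph $M_k$ minus one chord is a subdivision of $M_{k-1}$ (the two endpoints of the deleted chord become degree-2 vertices sitting on rim edges of an $M_{k-1}$), so any minor obtained after deleting a chord is already a minor of $M_{k-1}$. Nonplanarity then rules out contracting any chord (the quotient is planar: the rim becomes two cycles sharing a vertex with a matching between them) and deleting any rim edge (a Hamiltonian path with pairwise crossing chords is a planar ladder). Hence the only operations left are rim-edge contractions, and the branch sets are automatically rim arcs --- no separate structural lemma needed. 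Finally, ``no cubic vertex in a triangle'' applied to the rim-arc picture forces either no contractions ($G=M_k$) or the alternating pattern, which exists only for odd $k$ and yields $C_k^2$. This is the paper's entire proof.

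\textbf{Where your argument breaks.} You never pin down $k$, so you cannot use the minimality step, and you are forced to control branch-set shapes by hand. In your case~(i) you assert that $v_{i-1}$ is a singleton and that the spoke $v_{i-1}v_{i+k-1}$ survives as an edge of $G$; neither follows from minimizing $\sum_i(|B_i|-1)$, and both can fail (e.g.\ $v_{i-1}$ may lie in a two-vertex branch set, or that spoke may be deleted). The same objection applies to case~(ii), where you assume $v_{i+k-1}$ and $v_{i+k}$ are singletons. More broadly, your framework does not exclude edge deletions between branch sets, so the ``cubic vertex in a triangle'' contradictions you cite need not materialize. The ``irregular branch-set shapes'' and the ``mixed canonical model'' are then left as unproved assertions --- and without minimal $k$ they genuinely are nontrivial, since for non-minimal $k$ one can realize $M_n$ or $C_{2n-1}^2$ inside $M_k$ with quite varied branch-set patterns. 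Adopting minimal $k$ removes all of this at once.
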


\begin{proof} 
Let $k\ge3$ be the smallest integer such that $G$ is a minor of $M_k$. Let $M_k$ consist of a Hamilton cycle $C$ and pairwise crossing chords. The minimality of $k$ implies that no chord is deleted in obtaining $G$. Moreover, since $G$ is nonplanar, no chord is contracted and no edge of $C$ is deleted. So $G$ is obtained by contracting some edges of $C$. Since no cubic vertex of $G$ belongs to a triangle, it follows that either $G=M_k$ or $k\ge5$ is odd and $G=C^2_k$.
\end{proof}

We remark that the lemma holds even if we drop the nonplanarity assumption. 

The next is a theorem of Maharry and Robertson \cite{maharry} on $M_4$-free graphs. 
An {\it alternating double wheel} of length $2n$ ($n\ge2$), denoted by $AW_{2n}$, is obtained from a cycle $v_1v_2...v_{2n}v_1$ by adding two new adjacent vertices $u_1,u_2$ such that $u_i$ is adjacent to $v_{2j+i}$ for all $i=1,2$ and $j=0,1,...,n-1$.

\begin{lemma}[Maharry and Robertson \cite{maharry}]
If an internally $4$-connected graph $G$ is $M_4$-free then at least one of the following holds.\\ 
\indent (i) $G$ is planar;\\ 
\indent (ii) $|G|\ge8$ and $G\de\{v_1,v_2,v_3,v_4\}$ is edgeless, for some distinct $v_1,v_2,v_3,v_4 \in V (G)$; \\ 
\indent (iii) $G=DW_{n+3}$ or $AW_{2n}$ for some $n\ge3$; \\ 
\indent (iv) $G$ is the line graph of $K_{3,3}$; \\ 
\indent (v) $G$ has fewer than eight vertices. 
\label{lem:M4free}
\end{lemma}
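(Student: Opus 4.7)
The plan is to prove the lemma by induction on $|V(G)|$, using a splitter-theorem-style reduction for internally 4-connected graphs: every sufficiently large internally 4-connected graph, outside a short list of exceptions, admits an internally 4-connected minor with one fewer vertex, obtained by a small, local move (a controlled edge deletion or contraction followed by suppression of degree-2 vertices and parallel edges). With such a tool, the proof amounts to classifying which local moves on an already-classified graph $G'$ can yield an internally 4-connected graph $G$ that remains $M_4$-free.

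For the base case, one verifies conclusion (v) directly for $|V(G)| \le 7$, and checks by inspection that the sporadic graph $L(K_{3,3})$ is internally 4-connected and $M_4$-free, placing it into (iv). Outcome (ii) is singled out whenever it arises: if four vertices of $G$ cover every edge, we are done immediately, and this case is also closed under the reverse of the reduction, so it can be set aside once it appears in the tree of subcases.

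For the inductive step, take $G$ internally 4-connected, $M_4$-free, with $|V(G)| \ge 8$, not isomorphic to $L(K_{3,3})$, and not satisfying (ii). Apply the reduction to obtain an internally 4-connected minor $G'$ with $|V(G')| = |V(G)| - 1$; by the inductive hypothesis, $G'$ is of one of the types (i)--(v). The remaining work is a case analysis on the type of $G'$: for each type, enumerate all reverse moves (vertex splits and edge additions) that produce an internally 4-connected graph, and check in each subcase that the resulting $G$ either contains $M_4$ as a minor or is itself one of the listed types.

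The hard part is this extension analysis, especially when $G'$ is a double wheel, an alternating double wheel, or planar. The $DW$ and $AW$ families are rigid enough that almost every split forces an $M_4$ minor; one must isolate precisely the splits that produce another $DW$, another $AW$, or transition to a graph of type (ii). The planar case is the most geometric: since $G$ is nonplanar, the split must destroy every planar embedding of $G'$, so the newly inserted vertex is highly constrained, and a careful face-by-face argument is needed to show that the only $M_4$-free nonplanar outcome is a double wheel, an alternating double wheel, or a graph in (ii). Ensuring that this enumeration is exhaustive, covering all vertex and face choices for the split and missing no $M_4$-minor, is where the bulk of the Maharry--Robertson argument lies.
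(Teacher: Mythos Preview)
This lemma is not proved in the paper at all: it is quoted as a theorem of Maharry and Robertson \cite{maharry} and used as a black box in the proof of Lemma~\ref{lem:int4conn}. There is therefore no ``paper's own proof'' to compare your attempt against.

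As for the proposal itself, what you have written is a strategy outline, not a proof. The inductive scheme you describe---reduce an internally 4-connected graph to a smaller one by a controlled local move, then classify all reverse moves---is a reasonable high-level plan for a result of this shape, and it is in the spirit of how such structure theorems are often attacked. But the outline depends on a nontrivial ingredient you have not supplied: a reduction theorem guaranteeing that every sufficiently large internally 4-connected graph (outside a finite list of exceptions) has an internally 4-connected minor on one fewer vertex obtained by a bounded local move. Such statements exist in the literature, but they are themselves substantial, and the precise form you need (which exceptions, which moves) has to be stated and justified before the case analysis can even begin. You also explicitly defer the ``hard part''---the extension analysis when $G'$ is planar or a (alternating) double wheel---to a sentence acknowledging that this is where the real work lies. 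That is an honest description of the difficulty, but it means the proposal does not yet contain the key ideas that would make the argument go through; it identifies where they would be needed without providing them.
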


For any two vertices $x,y$ of a path $P$, let $P[x,y]$ denote the subpath of $P$ between $x,y$. 
Let $H$ be a graph. If $G$ is a subdivision of $H$, then {\it branch vertices} of $G$ are vertices of $G$ that correspond to vertices of $H$, and {\it arcs} of $G$ are paths of $G$ that correspond to edges of $H$. 
A {\it triad addition} of $H$ is obtained by adding a new vertex $v$ to $H$ and joining $v$ to three distinct vertices of $H$. 

\begin{lemma}\label{lem:t-add}
Let $H$ be a simple graph with $|H|\ge3$. If a $3$-connected graph $G$ has a subgraph $G'$ such that $G'$ is a subdivision of $H$ and $|G'|<|G|$, then $G$ contains a triad addition of $H$ as a minor.
\end{lemma}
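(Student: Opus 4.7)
The plan is to pick a component $C$ of $G - V(G')$ and use the 3-connectivity of $G$ to produce three internally disjoint paths from the contracted vertex $v^*$ of $G/C$ to three distinct branch vertices of $G'$, then assemble these paths with the arc structure of $G'$ into a triad addition minor of $H$.

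Since $|V(G')| < |V(G)|$, the set $V(G) \setminus V(G')$ is nonempty, so I pick a component $C$ of $G[V(G) \setminus V(G')]$ and denote the branch-vertex set $V(H)$ of $G'$ by $T$. The 3-connectivity of $G$ combined with $|V(G')| \ge |H| \ge 3$ forces the attachment set $N := N_G(V(C))$ to satisfy $|N| \ge 3$: otherwise $N \subseteq V(G')$ would be a cut of size at most $2$ separating $V(C)$ from the nonempty $V(G') \setminus N$. The same 3-connectivity argument shows that in $G/C$ (with $V(C)$ identified to a single vertex $v^*$), no 2-vertex cut separates $v^*$ from $T$. Applying Menger's theorem in its fan form (valid since $|T| \ge 3$), I obtain three internally disjoint paths $P_1, P_2, P_3$ in $G/C$ from $v^*$ to three distinct branch vertices $t_1, t_2, t_3 \in T$; after shortcutting each $P_i$ at any branch vertex it encounters internally, no $P_i$ will have an internal branch vertex.

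To construct the triad addition minor, I will set $B_v := V(C)$ together with the non-$V(G')$ internal vertices of the $P_i$, and set $B_t := \{t\}$ for each $t \in V(H)$, augmented by arc-interior vertices. For each arc $P_e = q_0 q_1 \cdots q_m$ of $G'$ with $e = xy$, I will split its interior between $B_x$ and $B_y$ at a chosen position $s$, so the remaining edge $q_s q_{s+1}$ witnesses $B_x$ adjacent to $B_y$ and preserves the edge $xy$ of $H$. Writing $a_i$ for the first vertex of $P_i$ in $V(G')$, the last edge of the prefix $P_i[v^*, a_i]$ yields an edge from $B_v$ to $B_{a_i}$ (if $a_i$ is a branch vertex) or from $B_v$ to $B_x$ or $B_y$ (if $a_i$ is interior to arc $xy$, according to the split). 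A short systems-of-distinct-representatives argument shows that the split positions can be coordinated so that $B_v$ is adjacent to three distinct branch sets, except when all three $a_i$ lie on the interior of a single arc $P_{xy}$.

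The main obstacle will be this degenerate case, where the direct construction only exposes the two targets $\{x, y\}$. I plan to handle it by using that at least one $t_i$, say $t_3$, is outside $\{x, y\}$, so by the shortcutting assumption the corresponding path $P_3$ must exit the arc $P_{xy}$ via a chord edge of $G$ to some vertex outside $V(P_{xy})$, whose existence is in turn dictated by the 3-connectivity of $G$ applied to the pair $\{x, y\}$. Absorbing this chord together with a suitable arc-interior segment into $B_v$, and if necessary relabeling which of the $|V(H)|+1$ branch sets plays the apex role (since we need only an abstract triad-addition minor, any triad addition of $H$ suffices), yields the required third distinct adjacency while still preserving the $H$-structure on all arcs.
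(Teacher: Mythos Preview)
Your overall strategy matches the paper's: take a vertex (or component) outside $G'$, produce three disjoint paths into $G'$ with feet $a_1,a_2,a_3$, and argue that unless all three feet lie on one arc you can contract to a triad addition. There are two problems, one small and one substantive.

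The small one: your SDR claim is not quite right. The obstruction is not ``all three $a_i$ lie in the \emph{interior} of one arc'' but ``all three $a_i$ lie on one \emph{closed} arc $P_{xy}$''. For instance, if $a_1,a_2$ are interior to $P_{xy}$ and $a_3=x$, then Hall's condition already fails ($S_1=S_2=\{x,y\}$, $S_3=\{x\}$), yet this is not covered by your stated exception. The paper phrases the bad case simply as ``$v_1,v_2,v_3$ are contained in a single arc''.

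The substantive gap is your handling of this bad case. You propose to follow $P_3$ to an edge $uw$ leaving $P_{xy}$, then ``absorb'' the segment from $a_3$ to $u$ into $B_v$ and ``relabel'' if needed. But absorbing an interval of $P_{xy}$ into $B_v$ disconnects what remains of the arc, so $B_x$ and $B_y$ are no longer adjacent and the edge $xy$ of $H$ is lost. Your suggested fix---letting some other branch set be the apex---does not work in general: if $B_v$ is to take over the role of $y$, it must be adjacent to $B_z$ for every $z\in N_H(y)$, but $B_v$ picks up only one extra adjacency through $w$, which is not enough once $\deg_H(y)\ge 3$. (Also, $w$ need not lie in $V(G')$ at all, so the ``chord'' may not land anywhere useful in one step.) The paper sidesteps all of this by an extremal choice: among all subdivisions $G'$ and path systems $P_1,P_2,P_3$ whose feet $v_1,v_2,v_3$ lie on a single arc $A$ with ends $x,y$, pick one minimizing $|A[x,v_1]\cup A[y,v_3]|$. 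Then any path $Q$ in $G\setminus\{v_1,v_3\}$ from the middle block $(P_1\cup P_2\cup P_3\cup A[v_1,v_3])\setminus\{v_1,v_3\}$ to $G'\setminus V(A[v_1,v_3])$ must, by minimality, land in $G'\setminus V(A)$; inside $G'\cup P_1\cup P_2\cup P_3\cup Q$ one then reads off a \emph{new} subdivision of $H$ together with three paths whose feet are no longer confined to a single arc. The key idea your proposal is missing is to change the subdivision $G'$ rather than to try to patch the branch sets directly.
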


\begin{proof} 
Let $v \in V(G)\setminus V(G')$ and let $P_1,P_2,P_3$ be three paths of $G$ from $v$ to $G'$ such that they are disjoint except for $v$. Let $v_i$ ($i=1,2,3$) be the other end of $P_i$. We first prove that $G',P_1,P_2,P_3$ can be chosen so that $v_1,v_2,v_3$ are not contained in a single arc of $G'$.

Suppose $G'$ has an arc $A$ containing $v_1,v_2,v_3$. Let $x,y$ be the two ends of $A$ and let $x,v_1,v_2,v_3,y$ be the order that they appear in $A$. Suppose $G',P_1,P_2,P_3$ are chosen such that $|A[x,v_1]\cup A[y,v_3]|$ is minimized. Since $G$ is 3-connected, $G\de\{v_1,v_3\}$ has a path $Q$ from $(P_1\cup P_2\cup P_3\cup A[v_1,v_3])\de\{v_1,v_3\}$ to $G'\de V(A[v_1,v_3])$. By the minimality of $|A[x,v_1]\cup A[y,v_3]|$, the other end of $Q$ must belong to $G'\de V(A)$. It follows that $G'\cup P_1\cup P_2\cup P_3\cup Q$ contains a subdivision of $H$ and three required paths.

Now we assume that $v_1,v_2,v_3$ are not contained in a single arc. It is easy to see that this property can be preserved when we contract $G'$ to $H$. At the end we obtain a triad addition of $H$ as a minor. 
\end{proof}

\section{A proof of Theorem~\ref{thm:main}}

Before proving Theorem \ref{thm:main} we establish three lemmas, which are the main parts of our proof. Let $\{G_1,G_2\}$ be a 3-separation of $G$. For $i=1,2$, let $G_i^\Delta$ be obtained from $G_i$ by adding all missing edges between vertices of $G_1\cap G_2$; let $G_i^Y$ be obtained from $G_i$ by adding a new vertex $v_i$ and joining $v_i$ to all three vertices of $G_1\cap G_2$. 

In the rest of this paper, we use $K_{3,3}^h$ to refer to the graph $EX_3$ and $K_5^h$ to refer to the graph $EX_6$ since these names better reflect the fact that $EX_3$ and $EX_6$ are formed from $K_{3,3}$ and $K_5$, respectively, by adding a handle edge.

\begin{lemma}
Let $G$ be $3$-connected and $\{K_{3,3}^+,K_{3,3}^h\}$-free. If $\{G_1,G_2\}$ is a $3$-separation of $G$ such that $G_1^Y$ is nonplanar, then $G_2^\Delta$ is a wheel.
\label{lem:wheel}
\end{lemma}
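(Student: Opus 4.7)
The plan is to: (i) use Lemma~\ref{lem:deg3} to find a $K_{3,3}$-subdivision inside $G_1^Y$ with $v_1$ as a cubic branch vertex, (ii) transplant this subdivision into $G$ by replacing the triad at $v_1$ with a ``Y''-pattern in $G_2$ based at a chosen vertex $w$, and (iii) squeeze $G_2$ using the $K_{3,3}^+$- and $K_{3,3}^h$-freeness until only a fan-shape survives, which becomes a wheel in $G_2^\Delta$.

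First, I check that Lemma~\ref{lem:deg3} applies to the cubic vertex $v_1\in V(G_1^Y)$. Any $\le 2$-separation of $G_1^Y$ isolating $v_1,a,b,c$ on one side would, after absorbing $G_2$ into that side, lift to a $\le 2$-separation of the 3-connected graph $G$, a contradiction. The lemma then produces a subgraph $G'\subseteq G_1^Y$ that is a subdivision of $K_{3,3}$ with $v_1$ cubic; label its three arcs from $v_1$ as $A_a,A_b,A_c$, beginning with the edges $v_1a,v_1b,v_1c$ and terminating at branch vertices $u_a,u_b,u_c$, and set $F:=G'-v_1\subseteq G_1$.

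Second, fix any $w\in V(G_2)\setminus\{a,b,c\}$. The symmetric lifting argument shows $G_2^Y$ is 3-connected, yielding three internally disjoint paths $P_a,P_b,P_c$ in $G_2$ from $w$ to $a,b,c$. Then $F\cup P_a\cup P_b\cup P_c$ is a $K_{3,3}$-subdivision of $G$ with $w$ playing the role of $v_1$. Since $G$ is $K_{3,3}^+$-free, any vertex of $G$ outside this subdivision would yield a $K_{3,3}^+$-minor via its path of attachment; hence every vertex of $G_2$ lies on $P_a\cup P_b\cup P_c$.

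Finally, using $K_{3,3}^h$-freeness, I examine the edges of $G_2$ that do not lie on the chosen paths. By a suitable coordinated choice of $w$, of the three paths, and of the distribution of each arc's internal vertices among the six branch-groups of the subdivision, such an ``extra'' edge of $G_2$ can generically be exhibited as an edge between two branch-groups in the same part of the $K_{3,3}$-bipartition---a handle edge---producing a $K_{3,3}^h$-minor of $G$ and a contradiction. The only configurations of $G_2$ that avoid producing such a chord are fans with apex some $c'\in\{a,b,c\}$ and rim path through the other two, which is exactly the condition that $G_2^\Delta$ is a wheel with hub $c'$. The technical heart is this chord case analysis: realizing each potential chord as a handle edge requires identifying a specific branch-group partition, and checking this over every possible non-wheel configuration of $G_2$ is the main obstacle.
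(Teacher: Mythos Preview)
Your steps (i) and (ii) are correct and coincide with the paper's opening moves: apply Lemma~\ref{lem:deg3} to the added cubic vertex of $G_1^Y$, transplant the resulting $K_{3,3}$-subdivision into $G$ via three internally disjoint paths $P_a,P_b,P_c$ in $G_2$ from a chosen $w$ to $a,b,c$, and use $K_{3,3}^+$-freeness to conclude $V(G_2)=V(P_a\cup P_b\cup P_c)$.

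Step (iii), however, is not a proof. You yourself label it ``the main obstacle'' and propose to show that every edge of $G_2$ off the three paths can, ``by a suitable coordinated choice,'' be exhibited as a handle edge---but you carry out none of this, and the claim as phrased is not even true. In the target configuration (a wheel) there are many such extra edges, namely all the spokes, so the real assertion must be that \emph{non-wheel} configurations force a forbidden minor; you acknowledge this but give no argument. Moreover, most of the offending configurations yield a $K_{3,3}^+$ minor rather than a $K_{3,3}^h$ minor: for instance, an internal vertex of $P_a$ adjacent to both $b$ and $c$, or two distinct internal vertices of $P_a$ adjacent to $b$ and to $c$ respectively, each give $K_{3,3}^+$, not a handle. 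A plan that routes every case through $K_{3,3}^h$ cannot succeed.

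The paper organizes step (iii) differently and avoids any open-ended case analysis. It uses $K_{3,3}^h$-freeness exactly once, and early: any cycle of $G_2$ disjoint from $\{a,b,c\}$ combines with the $K_{3,3}$-subdivision to produce a $K_{3,3}^h$ minor, so $G_2\setminus\{a,b,c\}$ is a forest. With this in hand one takes $P_a,P_b,P_c$ to be induced; each internal vertex must then (by $3$-connectivity and the forest property) have a neighbour in $\{a,b,c\}$, and a short sequence of $K_{3,3}^+$-arguments forces a single vertex of $\{a,b,c\}$ to be the common neighbour of \emph{all} internal vertices, i.e.\ the hub of a wheel. The idea missing from your outline is precisely this one clean use of $K_{3,3}^h$ to obtain the forest structure, after which only $K_{3,3}^+$ is needed and no ``coordinated re-choosing'' of $w$ or of branch-group partitions is required.
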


\begin{proof} Let $V(G_1) \cap V(G_2) = \{v_1,v_2,v_3\}$ and let $x$ be the cubic vertex added to $G_1$ in the formation of $G_1^Y$. Since $G$ is 3-connected and $G_1^Y$ is nonplanar, by Lemma~\ref{lem:deg3}, $G_1^Y$ contains a subdivision of $K_{3,3}$ such that $x$ is cubic in the subdivision. It follows that any cycle of $G_2$ disjoint from $\{v_1,v_2,v_3\}$ results in a $K_{3,3}^h$ minor in $G$. Thus $T = G_2\de \{v_1,v_2,v_3\}$ is a forest.

Let $u\in V(T)$. Since $G$ is 3-connected, $G_2$ contains three induced paths $P_{1},P_{2},P_{3}$ from $u$ to each of $v_1,v_2,v_3$, respectively, such that the three paths are disjoint except for $u$. If $G_2$ has a vertex outside $P_{1}\cup P_{2}\cup P_{3}$, then $G$ has a $K_{3,3}^{+}$ minor. So $T = (P_1\de v_1)\cup (P_2\de v_2)\cup (P_3\de v_3)$. If $P_1,P_2,P_3$ are all single edges, then $G_{2}^{\Delta}$ is a wheel $W_3$. Otherwise, suppose without loss of generality that $|P_1|\ge3$. 

Let $w$ be any internal vertex of $P_1$. Since $P_1$ is an induced path and $G$ is 3-connected, $w$ must be adjacent to some vertices outside $P_1$. Since $T$ is a tree, the only possible neighbors of $w$ outside $P_1$ are $v_2$ and $v_3$. If $w$ is adjacent to both $v_2,v_3$ then $G$ has a $K_{3,3}^+$ minor. So $w$ is adjacent to exactly one of $v_2,v_3$. If $w,w'$ are distinct internal vertices of $P_1$ such that both $wv_2$ and $w'v_3$ are edges of $G$, then $G$ again has a $K_{3,3}^+$ minor. Therefore, all internal vertices of $P_1$ are cubic and they have a common neighbor outside $P_1$, which we may assume to be $v_2$. 

Since $G$ is $K_{3,3}^+$-free, we deduce that $|P_2|=2$ and no internal vertex of $P_3$ is adjacent to $v_1$. So all vertices of $P_3\de v_3$ are cubic and adjacent to $v_2$. Therefore, $G_2$ consists of path $P_1\cup P_3$, all edges from $v_2$ to $(P_1\de v_1)\cup (P_3\de v_3)$, and possibly edges between $v_1,v_2,v_3$, which implies that $G_2^\Delta$ is a wheel.
\end{proof}

\begin{lemma}
Let $G$ be connected and $\{K_{3,3}^+,K_5^h\}$-free. If $G$ contains an $M_4$-minor then $G$ is a minor of $M_n$ for some $n\ge4$. 
\label{lem:hasM4}
\end{lemma}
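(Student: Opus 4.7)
The plan is to take $k\ge 4$ as the largest integer such that $G$ contains $M_k$ as a minor, to fix a subdivision $G'$ of $M_k$ in $G$ maximizing $|V(G')|$, and to show that $G=G'$. Once $G=G'$, it is a subdivision of $M_k$ and can be realized as a minor of some $M_n$ with $n\ge k$ by taking $M_n$ with $n$ large enough and contracting suitable cycle edges of $M_n$ to match the arc-length and chord placement of $G'$.

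I first reduce to the case where $G$ is $3$-connected; this step handles low-order separations of $G$ by isolating the $3$-connected side containing the $M_k$ subdivision (which is forced to lie on one side since $M_k$ is itself $3$-connected) and checking that the complementary side, being attached through at most two vertices, is absorbable into a larger $M_n$ minor. Assuming $G$ is $3$-connected, if $V(G')\ne V(G)$ then by Lemma~\ref{lem:t-add} applied with $H=M_k$, $G$ contains a triad addition of $M_k$ as a minor; equivalently, there exist a vertex $v\in V(G)\setminus V(G')$ and three internally disjoint paths $P_1,P_2,P_3$ from $v$ to three distinct vertices $v_1,v_2,v_3$ of $G'$, not all contained in a single arc of $G'$. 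I carry out a case analysis on the types of $v_1,v_2,v_3$ (branch vertex, cycle-arc interior vertex, or chord-arc interior vertex) together with their cyclic arrangement on $G'$. Exploiting that $M_k$ contains $K_{3,3}$ as a minor (obtained by contracting suitably chosen arcs of $G'$), each configuration yields either a subdivision of $M_{k+1}$ in $G$ --- contradicting the maximality of $k$ --- or a $K_{3,3}^+$ or $K_5^h$ minor in $G$ --- contradicting the freeness hypothesis. Extra edges of $G$ between vertices of $V(G')$ but not in $E(G')$ are treated analogously, playing the role of length-one ears.

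The main obstacle is the combinatorial case analysis of the triad attachment. Cyclic symmetries of $M_k$ collapse many cases into a few representative orbits, but within each orbit I must explicitly exhibit either the $M_{k+1}$ subdivision or the forbidden minor. The subtlest cases involve triads attaching at three branch vertices with generic cyclic spacing, where both the failure of the ladder extension and the presence of a $K_{3,3}^+$ pendant (or $K_5^h$ handle) must be verified simultaneously; here the interplay between the freeness hypotheses and the location of the $K_{3,3}$ (or $K_5$) minor inside $M_k$ is what drives the contradiction.
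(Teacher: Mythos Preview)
Your plan has a genuine gap: the target ``$G=G'$'' is false in general, and the dichotomy you propose for extra edges --- each one forces either an $M_{k+1}$ minor or a forbidden minor --- does not hold.

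Take $G=M_5/e$ for a rim edge $e$ of $M_5$. Then $G$ has nine vertices, fourteen edges, and one vertex of degree~$4$; it is connected, contains an $M_4$ minor, and (being a minor of the almost-planar graph $M_5$) is $\{K_{3,3}^+,K_5^h\}$-free. The largest $k$ with $M_k$ a minor of $G$ is $k=4$, since $|E(M_5)|=15>14=|E(G)|$. Any spanning subdivision $G'$ of $M_4$ inside $G$ must omit exactly one of the two chords meeting the degree-$4$ vertex; that leftover chord is your ``extra edge,'' yet $G$ contains neither $M_5$ nor $K_{3,3}^+$ nor $K_5^h$ as a minor. So neither branch of your dichotomy applies. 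More generally, any $G$ satisfying the hypotheses and possessing a vertex of degree $\ge 4$ cannot equal a subdivision of any $M_k$, so ``$G=G'$'' is simply the wrong goal.

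The paper sidesteps this by not attempting to realise $G$ as a subdivision at all. It fixes $k=4$ and uses $K_{3,3}^+$-freeness together with bare connectivity (no reduction to $3$-connectedness, no appeal to Lemma~\ref{lem:t-add}) to force any $M_4$-subdivision $H$ to be spanning with unsubdivided chords. Thus $G$ is a Hamilton cycle $C$ carrying the four $M_4$-chords plus possibly further chords of $C$. The argument then shows directly that every chord of $C$ joins opposite rim arcs $A_i,A_{i+4}$ and that any two non-adjacent chords cross. A Hamilton cycle with pairwise-crossing (among non-adjacent) chords is exactly what embeds as a minor in a sufficiently large $M_n$, with adjacent chords absorbed by contracting rim edges of $M_n$; there is no need for $G$ itself to be a subdivision of anything.
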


\begin{proof}
Since $M_4$ is cubic, it is a topological minor of $G$, so $G$ has a subgraph $H$ isomorphic to a subdivision of $M_4$. Since $G$ is $K_{3,3}^+$-free, $H$ must be a spanning subgraph and no chord of $M_4$ is subdivided. It follows that the rim of $H$ is a Hamilton cycle $C$ of $G$. Let $v_0,...,v_7$ be the eight cubic vertices of $H$, listed in the order they appear on $C$. We denote the path of $C$ from $v_i$ to $v_{i+1}$ by $A_i$, where $i=0,...,7$. (In this proof the indices are taken modulo 8.)

Let $e$ be any chord of $C$. We prove that there exists $i$ such that the two ends of $e$ belong to $A_i$ and $A_{i+4}$, respectively. We first observe that no $A_i$ contains both ends of $e$ because then $H+e$ contains a $K_{3,3}^+$ minor. Next, assume one end of $e$ is an internal vertex of some $A_i$. If the other end of $e$ does not belong to $A_{i+4}$, it is straightforward to verify that $H+e$ can be contracted to $M_4+v_jv_{j+2}$, for some $j$, and thus $G$ contains a $K_{3,3}^+$ minor. So both ends of $e$ are cubic vertices of $H$, implying $e=v_iv_{i+t}$ ($t=3,4,5$), which satisfies the requirement. 

To finish the proof, we only need to show that any two non-adjacent chords $e,f$ must cross. By what we proved in the last paragraph, this is clear if the four ends of $e,f$ are not contained in $A_i\cup A_{i+4}$ for any $i$. If all ends of $e,f$ are contained in $A_i\cup A_{i+4}$ for some $i$ and if $e,f$ do not cross each other, then $H+e+f$ can be contracted to $M_4+v_iv_{i+5}+v_{i+1}v_{i+4}$, which contains a $K_5^h$ minor. Therefore, $e,f$ must cross and that completes our proof.
\end{proof}

\begin{lemma}
Let $G$ be internally $4$-connected, nonplanar, and $\{K_5^+,K_{3,3}^+,K_5^h,K_{3,3}^h\}$-free. Then $G$ is $M_n$ or $C_{2n+1}^2$ or $DW_n$ or $AW_{2n}$ for some $n\ge3$.
\label{lem:int4conn}
\end{lemma}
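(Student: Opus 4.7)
The plan is to split according to whether $G$ contains $M_4$ as a minor.

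If $G$ has an $M_4$-minor, then since $G$ is in particular $\{K_{3,3}^+, K_5^h\}$-free, Lemma~\ref{lem:hasM4} will give that $G$ is a minor of some $M_n$ with $n\ge 4$. Because $G$ is internally $4$-connected and nonplanar, Lemma~\ref{lem:minorM} then forces $G=M_n$ or $G=C_{2n-1}^2$. Both outcomes lie in the desired list, after noting that $C_5^2=K_5=DW_3$ places the smallest squared-cycle case into the $DW_n$ family.

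Otherwise $G$ is $M_4$-free, and I would invoke Lemma~\ref{lem:M4free}. Outcome (i) contradicts nonplanarity, and outcome (iii) matches the conclusion directly. For outcome (iv), where $G$ is the line graph of $K_{3,3}$, I would rule it out by exhibiting a $K_{3,3}^+$ minor via a short explicit contraction in $L(K_{3,3})$. For outcome (v), I would enumerate the few internally $4$-connected nonplanar graphs on at most seven vertices: $K_5=DW_3$, $K_{3,3}=M_3$, the double wheels $DW_4$ and $DW_5$, and $C_7^2$ all lie in our list, while any other candidate (such as $K_6$ or the octahedron $K_{2,2,2}$) contains a $K_5^+$ minor and is excluded by hypothesis.

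The main obstacle will be outcome (ii): $|G|\ge 8$ and $V(G)\setminus S$ is independent for some $4$-element $S=\{s_1,s_2,s_3,s_4\}$. Writing $U=V(G)\setminus S$, so $|U|\ge 4$, internal $4$-connectivity forces each $u\in U$ to have at least three neighbors, all lying in $S$. The aim is to produce a $K_{3,3}^+$ minor, contradicting the hypothesis. I plan to locate three vertices $u_1,u_2,u_3\in U$ sharing at least three common neighbors in $S$; this yields a $K_{3,3}$ subgraph on $\{u_1,u_2,u_3\}$ together with three vertices of $S$, and any additional vertex of $U\cup S$ (one of which must exist because $|G|\ge 7$) together with an edge into this $K_{3,3}$ then supplies the required pendant. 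Finding the common triple uses pigeonhole on the five possible neighborhood types in $S$ (the four $3$-subsets plus $S$ itself) when $|U|$ is sufficiently large; the tight boundary case $|U|=4$ with the four $3$-subsets each realized exactly once will be handled by a short contraction argument exploiting the edges forced inside $S$ by nonplanarity and internal $4$-connectivity. The structure here is rigid enough that the bookkeeping is routine, but this is the one part of the argument that requires genuine care.
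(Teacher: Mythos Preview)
Your overall architecture matches the paper's: split on the $M_4$-minor, then run through the Maharry--Robertson outcomes. The handling of the $M_4$ case and of outcomes (i), (iii), (v) is essentially what the paper does.

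There is, however, a genuine gap in your treatment of outcome (ii). You state that the aim is to produce a $K_{3,3}^+$ minor in every instance, but this is impossible: the graph $AW_6$ is internally $4$-connected, nonplanar, $M_4$-free, $\{K_5^+,K_{3,3}^+,K_5^h,K_{3,3}^h\}$-free, and it satisfies (ii) with $S=\{u_1,v_2,v_4,v_6\}$ and $U=\{u_2,v_1,v_3,v_5\}$. So case (ii) must be allowed to output $AW_6$, not only a contradiction. Relatedly, your identified ``tight boundary case'' ($|U|=4$ with the four $3$-subsets each realized once) is not the live one: that configuration forces $G$ to be the cube, since any edge inside $S$ would place a cubic vertex of $U$ on a triangle, and the cube is planar. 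The genuine boundary is $|U|=4$ with exactly one vertex of $U$ adjacent to all of $S$ and the other three cubic with distinct $3$-element neighbourhoods; this is precisely $AW_6$. The paper's argument organizes (ii) by counting how many vertices of $U$ have degree~$4$: at least two gives $K_{3,3}^+$; none gives the cube; exactly one gives $|U|\in\{4,5\}$, yielding $AW_6$ or (for $|U|=5$) the cube plus a dominating vertex, which contains $K_{3,3}^+$.

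A smaller point: for outcome (iv) you propose to exhibit $K_{3,3}^+$ in $L(K_{3,3})$, but $L(K_{3,3})\setminus v$ is planar for every vertex $v$, so this route is at best delicate. The paper instead exhibits a $K_{3,3}^h$ minor, which is straightforward; you should do the same.
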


\begin{proof} If $G$ has an $M_4$-minor then the result follows immediately from lemmas \ref{lem:hasM4} and \ref{lem:minorM}. So we assume that $G$ is $M_4$-free. Since $G$ is nonplanar, one of (ii)-(v) of Lemma~\ref{lem:M4free} must hold. If (iii) holds then we are done. Case (iv) does not hold since $L(K_{3,3})$ contains a $K_{3,3}^h$ minor, as shown in Figure \ref{fig:lk33}.

\begin{figure}[ht]
\centerline{\includegraphics[scale=0.47]{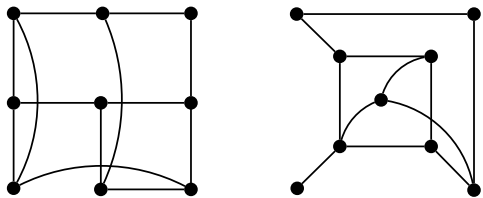}}
\caption{$K_{3,3}^h$ is a minor of $L(K_{3,3})$, and $K_{3,3}^+$ is a minor of Cube$+v$}
\label{fig:lk33}
\end{figure}

Suppose (ii) holds. Let $V(G)\de\{v_1,v_2,v_3,v_4\} = \{u_1,...,v_k\}$. Note that each $u_i$ has degree either 3 or 4. If two or more of them are of degree 4 then $K_{3,3}^+$ is a minor of $G$. If every $u_i$ is cubic, since no two of them have the same set of neighbors, it follows that $k=4$ and $G$ is the cube, which does not happen since $G$ is nonplanar. So we assume that $u_1$ has degree 4, while every other $u_i$ is cubic. As a result, $k=4$ or 5. If $k=4$ then $G$ is $AW_6$. If $k=5$ then $G\de u_1$ is the cube and so $G$ has a $K_{3,3}^+$ minor, as shown in Figure \ref{fig:lk33}.

It remains to consider case (v). If $|G| \leq 5$, since $G$ is nonplanar, we have $G \cong K_5 = DW_3$. If $|G|=6$, then $G$ contains a $K_{3,3}$ subgraph, as every 3-connected nonplanar graph with more than five vertices contains a $K_{3,3}$ minor. Since no cubic vertex of $G$ belongs to a triangle, it follows that either $G=K_{3,3}=M_3$, or $G$ contains at least two edges from each color class of $K_{3,3}$. If $G$ contains no other edges then $G=DW_4$; if $G$ contains at least one other edge then $G$ contains $K_5^+$.

If $|G|=7$, we may assume that $G$ contains a subgraph obtained from $K_{3,3}$ by subdividing an edge exactly once. Let $\{b_1,b_2,b_3\}$ and $\{r_1,r_2,r_3\}$ be the two color classes of $K_{3,3}$ and let $v$ be the vertex  subdividing edge $b_1r_1$. Since $v$ is not a cubic vertex that belongs to a triangle, we deduce that $v$ has degree at least 4. If $v$ is adjacent to both $b_2,b_3$ or both $r_2,r_3$, then $G$ contains $K_{3,3}^+$. So the degree of $v$ is 4, and we may assume that $v$ is adjacent to $b_2$ and $r_2$. Let $H$ be the subgraph of $G$ obtained from the subdivision of $K_{3,3}$ by adding edges $v b_2$ and $v r_2$. Now $b_1$ and $r_1$ are in triangles, so they must have degree at least 4 in $G$. If $G$ contains the edge $b_1 r_1$, then $G$ contains $K_{3,3}^+$. So $b_1$ must be adjacent to $b_2$ or $b_3$, and $r_1$ must be adjacent to $r_2$ or $r_3$. It follows that $b_3$ and $r_3$ are in triangles and thus they must also have degree at least 4 in $G$. Notice that $H+b_1b_3+r_1r_2 \cong H+ b_1b_3+r_2r_3$, which contains $K_5^h$. So $G$ is obtained from $H$ either by adding edges $b_1b_3$ and $r_1r_3$, which leads to $G=C_7^2$, or by adding paths $b_1b_2b_3$ and $r_1r_2r_3$, which leads to $G=DW_5$. 
\end{proof}

Now we are ready to prove Theorem \ref{thm:main}.

\begin{proof} [Proof of Theorem \ref{thm:main}] We prove implications \ (ii) $\Rightarrow$ (i) $\Rightarrow$ (iii)  $\Rightarrow$ (iv) $\Rightarrow$ (ii), \ where (iv) is the statement

(iv) {\it $G$ is $\{K_5^+,K_{3,3}^+,K_5^h,K_{3,3}^h\}$-free.} 

(ii) $\Rightarrow $ (i): Note that, if $\cal P$ is the class of all planar and almost-planar graphs, then $\cal P$ is closed under taking minors. Therefore, we only need to verify that each M\"obius ladder, each double wheel, and each graph in $\cal W$ is almost-planar. In $M_n,$ deleting a rim edge or contracting a chord results in a planar graph, so $M_n$ is almost-planar. In $DW_n$, deleting the axle or a rim edge or contracting a spoke results in a planar graph, so $DW_n$ is almost-planar. In any $G\in\cal W$, contracting a spoke or an edge in the common triangle or deleting a rim edge results in a planar graph, so $G$ is almost planar.

(i) $\Rightarrow $ (iii): Since $\cal P$ is closed under taking minors, we only need to show that each $H\in \cal F$ is not in $\cal P$. To do this, we only need to find an edge $e$ such that both $H\de e$ and $H/e$ are nonplanar.
In $EX_1$ every edge satisfies the requirement. In $EX_2$ and $EX_8$, any edge incident with the top cubic vertex (of the drawing shown in Figure~\ref{fig:exminors}) has the required property. In $K_{3,3}^h$ and $K_5^h$ ($EX_3$ and $EX_5$, respectively), the handle edge meets the requirement.  

(iii) $\Rightarrow$ (iv): Since $\{K_5^h,K_{3,3}^h\}\subseteq \cal F$, we only need to show that $G$ is $\{K_5^+, K_{3,3}^+\}$-free. Suppose otherwise. Then $G$ has a vertex $v$ such that $G\de v$ is nonplanar, which implies that $G$ has a subgraph $G'$ such that $|G'|<|G|$ and $G'$ is a subdivision of $K_5$ or $K_{3,3}$. By Lemma \ref{lem:t-add}, $G$ contains a triad addition of $K_5$ or $K_{3,3}$ as a minor. However, $EX_2$ is the only triad addition of $K_5$, and $EX_1,EX_8$ are the only triad additions of $K_{3,3}$. Thus (iii) is violated no matter what is the outcome, which proves (iv).

(iv) $\Rightarrow $ (ii): Assume $G$ is $\{K_5^+,K_{3,3}^+,K_5^h,K_{3,3}^h\}$-free. We will show $G$ is a minor of a double wheel, a M\"obius ladder, or a graph in $\cal W$. Recall that a {\it 3-sum} of two graphs $G_1,G_2$ is obtained by identifying a triangle of $G_1$ with a triangle of $G_2$, and then deleting 0, 1, 2, or 3 of the three identified edges.

If $G$ has three vertices whose deletion results in more than two components, since $G$ is $K_{3,3}^+$-free, there should be exactly three components and thus $G$ can be expressed as a 3-sum of three graphs $G_1,G_2,G_3$ over a common triangle. For each $i=1,2,3$, by applying Lemma \ref{lem:wheel} to the 3-separation of $G$ determined by $\{G_j\cup G_k, G_i\}$, where $\{i,j,k\}=\{1,2,3\}$, we conclude that $G_i$ is a wheel, which implies that $G\in\cal W$. In the rest of this proof we assume that any three vertices separate $G$ into at most two components, and we call such a graph {\it strongly connected}. 

We claim that $G$ has an internally 4-connected nonplanar minor $G'$ such that $G$ is obtained by 3-summing wheels to distinct triangles of $G'$. Suppose the claim is false. We consider a counterexample $G$ with $|G|$ as small as possible. Since internally 4-connected graphs are not counterexamples, $G$ must have a 3-separation $\{G_1,G_2\}$ such that neither part is $K_{1,3}$. Since $G$ is nonplanar, at least one of $G_1^Y$ and $G_2^Y$ is nonplanar. Without loss of generality, suppose $G_1^Y$ is nonplanar. Let us choose such a 3-separation with $G_1$ as small as possible. Since $G_2\ne K_{1,3}$, $G_1^\Delta$ is a 3-connected minor of $G$. Notice that $G_1^\Delta$ is also strongly connected, because any bad 3-cut of $G_1^\Delta$ would lead to a bad 3-cut of $G$. Moreover, $G_1^\Delta$ is nonplanar, because otherwise, since $G_1^Y$ is nonplanar, the common triangle $xyz$ of $G_1^\Delta$ and $G_2^\Delta$ would not be a facial triangle of $G_1^\Delta$, which implies that $G\de\{x,y,z\}$ has more than two components, contradicting the strong connectivity of $G$. Therefore, $G_1^\Delta$ satisfies all the assumptions of our claim and thus it is not a counterexample, as it is smaller than $G$. So $G_1^\Delta$ is obtained from an internally 4-connected nonplanar minor $G'$ by 3-summing wheels to different triangles of $G'$. By the minimality of $G_1$, triangle $xyz$ must be contained in $G'$ and thus $G$ is not a counterexample because $G$ is also obtained from $G'$ by 3-summing wheels to distinct triangles of $G'$, since $G_2^\Delta$ is a wheel by Lemma~\ref{lem:wheel}. This contradiction proves our claim. 


By Lemma~\ref{lem:int4conn}, we know $G'= M_n$ or $C_{2n+1}^2$ or $DW_n$ or $AW_{2n}$ for some $n\ge3$. Note that $C_{2n+1}^2$ is a minor of $M_{2n+1}$ and $AW_{2n}$ is a minor of $DW_{2n}$ so the theorem holds if $G=G'$. Next, suppose at least one wheel is added to $G'$. Since $M_n$ and $AW_{2n}$ do not have any triangles, we only need to consider all possible ways to add wheels to triangles of $C_{2n+1}^2$ and $DW_n$.  

Suppose $G'=C_{2n+1}^2$ ($n\ge3$). Observe that the 3-sum of $G'$ and a wheel is a graph with an $M_4$ minor. By Lemma \ref{lem:hasM4}, $G$ is a minor of a M\"obius ladder and thus the theorem holds.   

Suppose $G'=DW_n$ ($n\ge4$) is obtained by adding two adjacent vertices $u_1$ and $u_2$ to a cycle $C=v_0v_1...v_{n-1}v_n$ and joining each of $u_1,u_2$ to all $v_i$ (indices are taken modulo $n$ in this paragraph). Let $H$ be a 3-sum of $G'$ and a wheel $W_k$ over a triangle $T$. If $T$ is $u_1u_2v_i$ then $H$ contains a $K_{3,3}^+$ minor since $H\de \{v_{i+2}, v_{i+3}, ..., v_{i-2}\}$ contains a $K_{3,3}$ minor. There is only one other type of triangle in $G'$, so suppose $T$ is $u_1v_1v_2$. If $v_1v_2$ is an edge of $H$ then $H$ contains a $K_5^h$ minor, which can be seen by considering the graph formed by cycle $C$, cycle $v_0u_1v_3u_2v_0$, edges $u_1u_2,u_2v_1$, and three paths from $W_k\de V(T)$ to $T$. The same configuration also shows that if $k\ge 4$ then the hub of $W_k$ is not identified with $v_1$ or $v_2$. So the effect of 3-summing $W_k$ to $G'$ is to subdivide an edge $v_iv_{i+1}$ and to join all the subdividing vertices with some $u_j$. Many wheels can be added to $DW_n$ in the same fashion. Thus 3-summing wheels to $DW_n$ either creates a $K_{3,3}^+$ or $K_5^h$ minor or results in a minor of a double wheel. 

Finally, let $G'=K_5$ (note $K_5 \cong C_5^2 \cong DW_3$). We assume that $G$ is $M_4$-free because otherwise the theorem holds by Lemma \ref{lem:hasM4}. Let $\cal T$ be the set of {\it summing triangles}, which are triangles of $G'$ over which a wheel is 3-summed to $G'$ in obtaining $G$. To finish off this last case we first determine $\cal T$ and then determine how wheels are 3-summed to each member of $\cal T$. 

Let $V(G')=\{1,2,3,4,5\}$ and let $\mathcal T_1=\{123,124,134\}$ and $\mathcal T_2=\{123, 234, 145\}$. We claim that, up to a permutation, either $\mathcal T=\mathcal T_1$ or $\mathcal T\subseteq \mathcal T_2$. First we observe that turning triangles 123, 124, 125 into triads in $G'$ results in $K_{3,3}^h$, and turning 123, 134, 145 into triads results in $M_4$. Therefore, \{123, 124, 125\} and \{123, 134, 145\} are not subsets of $\cal T$ because otherwise $G$ contains $K_{3,3}^h$ or $M_4$ as a minor. Similarly \{123, 124, 134, 234$\}\not\subseteq \cal T$ because otherwise $G$ contains a $K_{3,3}^+$ minor (deleting any of the triads still leaves a nonplanar graph). Now it is straightforward to verify that either some vertex $i$ belongs to at least three summing triangles, in which case $\mathcal T=\mathcal T_1$ (up to a permutation), or every vertex $i$ belongs to at most two summing triangles, in which case $\mathcal T\subseteq \mathcal T_2$ (up to a permutation), and thus the claim is proved.

Suppose a wheel $W_k$ is 3-summed to $G'$ over $xyz\in\cal T$. If $k=3$ then at least one of the three edges of $xyz$ is not in $G$, because otherwise $G$ contains a $K_5^+$ minor. If $k>3$ and if the hub of $W_k$ is identified with $x$ then $yz\not\in E(G)$ because otherwise $G$ contains a $K_5^h$ minor (Figure \ref{fig:k5a}(i)). 

\begin{figure}[ht]
\centerline{\includegraphics[scale=0.44]{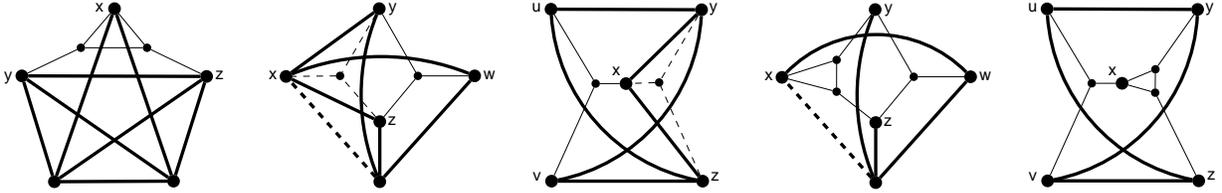}}
\caption{$G$ contains $K_5^h$, $K_{3,3}^+$, or $K_{3,3}^h$ }
\label{fig:k5a}
\end{figure}

We call $x$ a {\it hub} of $xyz$ if either $k>3$ and the hub of $W_k$ is identified with $x$, or $k=3$ and $x$ meets all edges of $G$ between $x,y,z$. The above observations imply that every summing triangle has a hub, and if $x$ is a hub then $yz\not\in E(G)$. In the following we make two observations. Let $xyz\in \cal T$. 

(a) {\it If $yzw\in \cal T$ then either $y$ or $z$ is a hub of $xyz$, where we assume $w\ne x$.} \\ 
\indent (b) {\it If $xuv \in \cal T$ then either $y$ or $z$ is a hub of $xyz$, where we assume $\{y,z\}\cap \{u,v\}=\emptyset$.}

\noindent To prove these two statements we assume that neither $y$ nor $z$ is a hub of $xyz$. Then either $k>3$ and the hub of $W_k$ is identified with $x$, or $k=3$ and both $xy,xz$ are edges of $G$. In both cases (a) and (b), if $k=3$ then $G$ contains a $K_{3,3}^+$ minor (Figure \ref{fig:k5a}(ii-iii)), and if $k>3$ then $G$ contains a $K_{3,3}^h$ minor (Figure \ref{fig:k5a}(iv-v)). These contradictions prove both (a) and (b).

Suppose $\mathcal T=\mathcal T_1$. Then none of the edges 23, 24, 34 are in $G$, because otherwise $G$ contains a $K_{3,3}^+$ minor. This observation and (a) imply that 1 is a hub for all three summing triangles. As a result, $G\de\{1,5\}$ is a cycle and thus $G$ is a minor of some $DW_n$.

Suppose $\mathcal T\subseteq \mathcal T_2$. We assume without loss of generality that $\mathcal T= \mathcal T_2$. We first note that 14 is not an edge of $G$ because otherwise $G$ contains a $K_{3,3}^h$ minor. This observation and (a) and (b) imply that 5 is a hub of 145, and hubs of 123, 234 are in $\{2,3\}$. If 2 (or 3) is a hub for both 123 and 234, then $G$ is a minor of some $DW_n$ (Figure \ref{fig:k5}(i)); if 2 is a hub of 123 and 3 is a hub of 234, then $G$ is a minor of a M\"obius ladder (Figure \ref{fig:k5}(ii)). Now the proof of Theorem \ref{thm:main} is complete. 
\begin{figure}[ht]
\centerline{\includegraphics[scale=0.5]{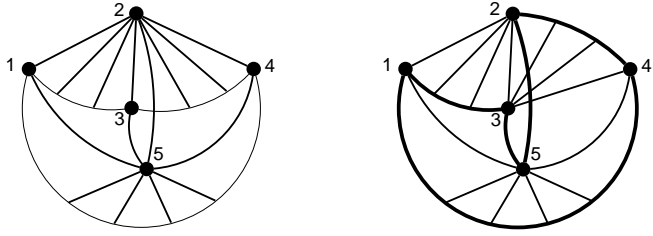}}
\caption{$G$ is a minor of $DW_n$ or a M\"obius ladder }
\label{fig:k5}
\end{figure}
\end{proof}

\section{Graphs of low connectivity}

In the published version \cite{gubser1} of \cite{gubser2}, graphs of low connectivity are also considered. Some of the statements in \cite{gubser1} are not accurate. Its second main theorem (Theorem 2.2) states: {\it A graph is neither planar nor almost-planar if and only if it has a $\{EX_i: 1\le i\le 8\}$-minor.} As we pointed out in the introduction, $K_{3,3}^+$ is a counterexample to this statement. In this section we prove a corrected version of this theorem. 

For any graph $G$, let $G\oplus e$ be obtained from $G$ by adding two adjacent new vertices, and let $G^*$ be obtained from $G$ by deleting all its isolated vertices (assuming that $E(G)\ne\emptyset$). Let $D(G)$ be the set of edges $e$ of $G$ such that $G\de e$ is planar. 

\begin{theorem}\label{thm:lowconn}
The following are equivalent for any nonplanar graph $G$. \\ 
\indent (i) $G$ is almost-planar;\\ 
\indent (ii) $G^*$ is obtained from a 3-connected almost-planar $H$ by subdividing edges in $D(H)$; \\ 
\indent (iii) $G$ is $\mathcal F'$-free, where $\mathcal F'=\{K_5^+, K_{3,3}^+, K_5^h, K_{3,3}^h,  K_5\!\oplus e, K_{3,3}\!\oplus e\}$. 
\end{theorem}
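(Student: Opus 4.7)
The plan is to prove the cycle $(i) \Rightarrow (iii) \Rightarrow (ii) \Rightarrow (i)$, of which $(iii) \Rightarrow (ii)$ carries the substance.

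For $(i) \Rightarrow (iii)$ I would invoke the minor-closedness of the class $\mathcal{P}$ of planar and almost-planar graphs (already noted in the proof of Theorem~\ref{thm:main}) and verify directly that each member of $\mathcal{F}'$ lies outside $\mathcal{P}$: for $K_5^+, K_{3,3}^+, K_5\!\oplus e, K_{3,3}\!\oplus e$, both the deletion and the contraction of the new edge leave the original $K_5$ or $K_{3,3}$ minor intact, so none of these graphs is almost-planar; and for $K_5^h$ and $K_{3,3}^h$ the handle edge has the same property, as already observed in the proof of Theorem~\ref{thm:main}. For $(ii) \Rightarrow (i)$, I would verify directly that subdividing an edge $f \in D(H)$ preserves almost-planarity: for any edge $e$ on the subdivided path replacing $f$, deleting $e$ leaves $H \setminus f$ with pendant paths attached, which is planar since $f \in D(H)$; and for every other edge of $G^*$, deletion and contraction agree with the corresponding operations in $H$ (up to the inert subdivisions), so the almost-planarity of $H$ suffices. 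Isolated vertices of $G$ play no role in these checks.

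For $(iii) \Rightarrow (ii)$, assume $G$ is nonplanar and $\mathcal{F}'$-free. My plan has four steps. First, I would prove $G^*$ is $2$-connected: a disconnected $G^*$ would have a nonplanar component and a separate edge, producing a $K_5\!\oplus e$ or $K_{3,3}\!\oplus e$ minor; a cut vertex of $G^*$ with both sides nontrivial similarly yields $K_5^+$ or $K_{3,3}^+$ once a pendant edge is routed to meet a branch vertex of the $K_5$ or $K_{3,3}$ subdivision on the other side. Second, I would prove that every $2$-cut $\{u,v\}$ of $G^*$ has at least one side that is a $u$-$v$ path: given a $2$-separation $\{G_1, G_2\}$ with (without loss of generality) $G_1 + uv$ nonplanar, if $G_2$ is not a $u$-$v$ path then $G_2$ contains either two internally disjoint $u$-$v$ paths or a vertex of degree at least $3$, and in either case the extra structure combined with a $K_5$ or $K_{3,3}$ subdivision of $G_1 + uv$ produces an element of $\mathcal{F}'$ as a minor, the precise element depending on whether $u$ and $v$ appear as branch vertices or as interior arc vertices of the subdivision. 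Third, I would suppress all degree-$2$ vertices of $G^*$ to obtain $H$, noting that no degree-$2$ vertex of $G^*$ can have adjacent neighbors (otherwise an argument analogous to Step~1 yields $K_5^+$ or $K_{3,3}^+$), so $H$ is simple; Step~2 then forces $H$ to be $3$-connected, and since $H$ is nonplanar and $\mathcal{F}$-free (as $\mathcal{F} \subseteq \mathcal{F}'$), Theorem~\ref{thm:main} gives that $H$ is almost-planar. Fourth, for each edge $e \in E(H)$ actually subdivided in $G^*$ I would show $e \in D(H)$: otherwise $H \setminus e$ is nonplanar, and a subdivision vertex $w$ of $e$ together with a removed subdivision edge furnishes a pendant edge at an endpoint $u$ of $e$, and a case analysis on where $u$ lies in a $K_5$ or $K_{3,3}$ subdivision of $H \setminus e$ (branch vertex, interior of an arc, or outside the subdivision), with contractions used to reroute when $u$ is an interior vertex, yields one of $K_5^+, K_{3,3}^+, K_5\!\oplus e, K_{3,3}\!\oplus e$ as a minor, contradicting $\mathcal{F}'$-freeness.

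The main obstacle will be Step~2 of $(iii) \Rightarrow (ii)$. The difficulty is that $G_2$ admits many structural possibilities, including cycles through $u, v$, branching vertices, and nested $2$-cuts, and in each subcase the precise minor extracted depends on how $u$ and $v$ appear in the $K_5$ or $K_{3,3}$ subdivision obtained from $G_1 + uv$. Keeping this interaction under control, so that every subcase yields a genuine member of $\mathcal{F}'$, is where the care is required.
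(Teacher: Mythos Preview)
Your overall cycle is the same as the paper's, and your treatment of $(i)\Rightarrow(iii)$ and $(ii)\Rightarrow(i)$ matches. There is one slip: in Step~3 you assert $\mathcal F\subseteq\mathcal F'$ to conclude $H$ is almost-planar, but this inclusion is false --- $EX_1,EX_2,EX_8$ are triad additions of $K_{3,3}$ and $K_5$, not members of $\mathcal F'$. The fix is immediate: $\mathcal F'$-freeness of $H$ gives $\{K_5^+,K_{3,3}^+,K_5^h,K_{3,3}^h\}$-freeness, and the implications $(\mathrm{iv})\Rightarrow(\mathrm{ii})\Rightarrow(\mathrm{i})$ established inside the proof of Theorem~\ref{thm:main} then yield that $H$ is almost-planar. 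This is exactly what the paper invokes.

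For $(iii)\Rightarrow(ii)$ your direct structural route works, but the paper avoids the case analysis you flag as the main obstacle. It argues by a minimal counterexample and uses one observation repeatedly: if $G$ is $2$-connected and $G\backslash v$ is nonplanar for some vertex $v$, then $G$ contains $K_5^+$ or $K_{3,3}^+$ (route a path from $v$ to a Kuratowski subdivision and contract). With this in hand, Step~2 collapses to one line: take an \emph{induced} $u$--$v$ path $P$ in $G_2$; if any $w\in V(G_2)\setminus V(P)$ exists then $G^*\backslash w\supseteq G_1\cup P$ is a subdivision of $G_1+uv$, hence nonplanar, forcing a $K_5^+$ or $K_{3,3}^+$ minor. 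No analysis of where $u,v$ sit in the subdivision is needed. The same observation also dispatches your Step~4 without cases: if $e\in E(H)$ is subdivided in $G^*$ and $H\backslash e$ is nonplanar, then for any subdivision vertex $w$ on $e$ the graph $G^*\backslash w$ contains a subdivision of $H\backslash e$, so it is nonplanar and the forbidden minor follows. The paper further wraps this in an induction (minimal counterexample), so it only ever handles a single $2$-separation and then appeals to the smaller $H_1=G_1+uv$; this replaces your global ``suppress all degree-$2$ vertices'' step and the attendant simplicity check. Your approach is not wrong, but the single ``$G\backslash v$ nonplanar $\Rightarrow$ $K_5^+$ or $K_{3,3}^+$'' lemma is the organizing idea that removes the casework you anticipated.
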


One possible sharpening of (ii) is to describe $D(H)$ explicitly. If $H$ is a M\"obius ladder or a double wheel or a graph in $\cal W$, then it is easy to determine $D(H)$. For each nonplanar minor $H'$ of $H$, one could also describe $D(H')$ because the structure of $H$ is simple enough. However, we choose not to include it in the paper since such a description is tedious and its derivation is straightforward. 
Statement (ii) is also touched in \cite{gubser1}, but the treatment is not rigorous. \\ 
\makebox[9mm]{} \parbox[t]{15cm}{\small  
Two corollaries of Theorem 2.1 characterize those almost-planar graphs that are not 3-connected. The elementary proofs are omitted. \\ 
{\bf Corollary 2.4.} {\it If $G$ is a connected, almost-planar graph, then $G$ is a series-parallel extension of a simple, 3-connected, almost-planar graph.} \\ 
{\bf Corollary 2.5.} {\it If $G$ is a disconnected, almost-planar graph, then $G$ is the union of a connected, almost-planar graph and a set of isolated vertices.}
} \medskip \\ 
In particular, both corollaries are not ``if and only if" type of statements, and (2.4) cannot be turned into such a statement. Our (ii) corrects both problems.

\begin{proof}[Proof of Theorem \ref{thm:lowconn}] 
We prove implications (ii) $\Rightarrow $ (i) $\Rightarrow $ (iii) $\Rightarrow $ (ii). 

(ii) $\Rightarrow $ (i): Since adding isolated vertices to an almost-planar graph results in an almost-planar graph, we only need to show that $G^*$ is almost-planar. This is clear because for each edge $e$ of $G^*$, $G^*\de e$ is planar if $e$ is an edge obtained by subdividing an edge of $D(H)$, while $G^*/e$ is planar if $e$ belongs to $E(H)\de D(H)$. 

(i) $\Rightarrow $ (iii): As in the proof of (i) $\Rightarrow $ (iii) of  Theorem~\ref{thm:main}, we only need to find, for each $H\in\cal F'$, an edge $e$ of $H$ such that both $H\de e$ and $H/e$ are nonplanar. The case $H=K_5^h$ or $K_{3,3}^h$ was settled in the proof of Theorem~\ref{thm:main}. If $H=K_5^+$, $K_{3,3}^+$, $K_5\!\oplus e$, or  $K_{3,3}\!\oplus e$, then the edge outside $K_5$ or $K_{3,3}$ satisfies the requirement.

(iii) $\Rightarrow$ (ii): Suppose the implication does not hold. We choose a counterexample $G$ with $|G|$ as small as possible. We first prove that $G$ is connected. If $G$ is disconnected, since $G$ is nonplanar, one component of $G$ contains a $K_{5}$ or $K_{3,3}$ minor. If another component of $G$ contains an edge, then $G$ contains $K_5\!\oplus e$ or $K_{3,3}\!\oplus e$  as a minor. So $G$ must have an isolated vertex $v$. By the minimality of $G$, $G\de v$ satisfies (ii), which implies $G$ satisfies (ii). This contradiction proves that $G$ is connected.

If $G$ has a 1-separation $\{G_1,G_2\}$, since $G$ is nonplanar, at least one $G_i$ contains $K_5$ or $K_{3,3}$ as a minor. Hence $G$ contains $K_{5}^{+}$ or $K_{3,3}^{+}$ as a minor. This contradiction proves that $G$ is 2-connected. 
 
Since 3-connected $\{K_5^+, K_{3,3}^+,K_5^h, K_{3,3}^h\}$-free nonplanar graphs are almost-planar, as shown in the proof of Theorem~\ref{thm:main} (implications (iv) $\Rightarrow$ (ii) $\Rightarrow$ (i)), $G$ cannot be 3-connected and thus $G$ has a 2-separation $\{G_1,G_2\}$. Let $V(G_1\cap G_2)=\{x,y\}$ and, for $i=1,2$, let $H_i=G_i$ or $G_i+xy$ in case $xy$ is not an edge of $G$. Since $G$ is nonplanar, we may assume without loss of generality that $H_1$ is nonplanar. Note that $G_1$ is planar because otherwise $G$ would contain a $K_5^+$ or $K_{3,3}^+$ minor. It follows that $xy\not\in E(G)$, since $H_1$ is nonplanar. 

Let $P$ be an induced path of $G_2$ between $x,y$. If $G_2$ has a vertex $v$ outside $P$, then $G\de v$ is nonplanar, because $G_1\cup P$, a subdivision of $H_1$, is nonplanar. This implies that $G$ contains a $K_5^+$ or $K_{3,3}^+$ minor, which is impossible. Therefore, $G_2$ must equal $P$ and thus $G$ is obtained from $H_1$ by subdividing edge $xy$. By the minimality of $G$, $H_1$ must satisfy (ii). Let $H_1$ be obtained from a 3-connected almost-planar graph $H$ by subdividing edges in $D(H)$. Each edge of $E(H)\de D(H)$ is not subdivided in the formation of $H_1$ and deleting such an edge in $H_1$ leaves a nonplanar graph. Therefore, $xy$ is not such an edge since $H_1\de xy=G_1$ is planar. It follows that there is an edge $e\in D(H)$ such that $xy$ belongs to a path (of $H_1$) obtained by subdividing $e$. Now it is clear that $G$ is obtained by repeatedly subdividing $e$, and thus $G$ satisfies (ii), which contradicts the assumption that $G$ is a counterexample. This contradiction completes our proof.
\end{proof}

\end{document}